\def\marg#1#2{\def\marginnotetextwidth{\the\textwidth}\marginnote{\bf #1}{\bf #2}}
\numberwithin{equation}{section}
\newtheorem{theorem}{Theorem}[section]
\newtheorem{theorem*}{Theorem}
\newtheorem{lemma}[theorem]{Lemma}
\theoremstyle{definition}
\newtheorem{example}[theorem]{Example}
\newtheorem{definition}[theorem]{Definition}
\newtheorem*{definition*}{Definition}
\newtheorem{remark}[theorem]{Remark}
\newcommand{\GLL}{{\operatorname{GL}}}
\newcommand{\ch}{{\operatorname{char}}}
\newcommand{\Hom}{{\operatorname{ Hom}}}
\newcommand{\Br}{{\operatorname{ Br}}}
\newcommand{\Brnr}{{\operatorname{ Br_{\textrm{nr}}}}}
\newcommand{\Gal}{{\operatorname{ Gal}}}
\newcommand{\Pic}{{\operatorname{ Pic}}}
\begin{document}

\title[Tori and  surfaces violating a local-to-global principle for rationality]
{Tori and surfaces violating a local-to-global principle for rationality \\
\bigskip
{\it {Tores et surfaces violant le principe local-global pour la rationalit\'e}}}

\author{Boris Kunyavski\u\i }

\subjclass{14E08 14G12 14G25  14J20 14J26 14M20 11G35 20G30}

\keywords{algebraic torus, toric variety, rational surface, conic bundle, rationality,
Brauer group}

\address{Department of
Mathematics, Bar-Ilan University, 5290002 Ramat Gan, ISRAEL}
\email{kunyav@macs.biu.ac.il}

\begin{abstract}
We show that even within a class of varieties where the Brauer--Manin obstruction
is the only obstruction to the local-to-global principle for the existence of rational
points (Hasse principle), this obstruction, even in a stronger, base change invariant form,
may be insufficient for explaining counter-examples to the local-to-global principle for rationality.
We exhibit examples of toric varieties and rational surfaces over an arbitrary global field $k$ each of those, in
the absence of the Brauer obstruction to rationality, is rational over all completions of $k$ but is not $k$-rational.

\medskip
\noindent
{\sc {R\'esum\'e}}. Nous d\'emontrons que m\^eme dans une classe des vari\'et\'es
o\`u l'obstruction de Brauer--Manin est la seule obstruction \`a l'existence
de points rationnels (le principe de Hasse) cette obstruction, m\^eme sous
sa forme la plus forte invariante par rapport au changement de base, peut \^etre
insuffisant pour expliquer des contre-exemples au principe local-global pour la
rationalit\'e. Nous pr\'esentons des exemples de vari\'et\'es toriques et de
surfaces rationnelles sur un corps global arbitraire $k$ dont chacune
est rationnelle partout localement mais n'est pas $k$-rationnelle, en absence
d'obstruction de Brauer \`a la rationalit\'e.
\end{abstract}

\thanks{This research was supported by the ISF grant 1994/20 and accomplished
during the author's visit to the IHES (Bures-sur-Yvette).
Support of these institutions is gratefully acknowledged.}

\maketitle

\section{Introduction}

This short note is inspired by a recent result by Sarah Frei and Lena Ji \cite{FJ}
where the authors have constructed a smooth, projective, $\mathbb Q$-unirational
threefold $X$, a smooth intersection of two quadrics in $\mathbb P^5$, such that
$X_v:=X\times _\mathbb Q\mathbb Q_v$ is $\mathbb Q_v$-rational
for all places $v$ of $\mathbb Q$ but $X$ is not $\mathbb Q$-rational (the first version of \cite{FJ} was conditional
on the Birch and Swinnerton-Dyer conjecture for the Jacobian of a certain genus 2 curve, in the second version
the proof is unconditional). 
Additional properties of $X$ are the absence of the Brauer obstruction to rationality (i.e.  one has
$\Br (X\times_{\mathbb Q}K)=\Br (K)$ for all field extensions $K/\mathbb Q$),
and the existence of an integral model $\mathcal X$ of $X$ whose special fibres $\mathcal X_p$
at all odd primes $p$ are $\mathbb F_p$-rational.

Colliot-Th\'el\`ene asked the author whether one can find an example with similar properties among
algebraic tori. The first result of the present note consists in an (unconditional) affirmative answer
to this question.

\begin{theorem} \label{th:tori}
For any global field $k$ there exists a smooth, projective toric $k$-variety $X$
for which the Brauer obstruction to rationality is absent, $X_v:=X\times _kk_v$ is $k_v$-rational
for all places $v$ of $k$ but $X$ is not $k$-rational.

\end{theorem}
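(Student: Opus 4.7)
The plan is to translate the statement into the language of Galois lattices and the classical birational invariants of algebraic tori. Let $L/k$ be a finite Galois extension with group $G=\Gal(L/k)$, and let $T$ be a $k$-torus split by $L$; then $T$ is determined by its character $G$-lattice $\hat T$. A smooth projective equivariant compactification $X$ of $T$ exists, and in a flasque resolution $0\to\hat T\to P\to F\to 0$ (with $P$ permutation, $F$ flasque) one has $F\cong\Pic(X_L)$ up to stable equivalence. Standard Colliot-Th\'el\`ene--Sansuc theory then yields $\Br(X)/\Br(k)\cong H^1(G,F)$; the base-change-invariant vanishing of the Brauer obstruction amounts to $F$ being \emph{invertible} (a direct summand of a permutation lattice); stable $k$-rationality of $T$ is equivalent to $F$ being stably permutation over $G$; and stable $k_v$-rationality of $T_v$ is equivalent to $F|_{D_v}$ being stably permutation over the decomposition group $D_v\le G$.

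The task thus reduces to producing a finite group $G$ and an invertible $G$-lattice $F$ with the following properties: $F$ is not stably permutation over $G$, yet $F|_H$ is stably permutation for every subgroup $H$ in a prescribed family $\mathcal F$; and the torus built from $\hat T$ is actually rational (not merely stably rational) over each $k_v$ whose decomposition group lies in $\mathcal F$. The natural candidates come from the work of Endo--Miyata and Colliot-Th\'el\`ene--Sansuc: on small non-cyclic groups such as $(\Z/p)^n$ with $n\ge 3$ or small metabelian groups, explicit flasque resolutions of norm-one- or multinorm-type lattices yield invertible-but-not-stably-permutation $F$ whose restriction to every cyclic subgroup (and often to every proper subgroup) is stably permutation. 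One then checks by hand that the resulting torus is in fact $k_v$-rational, exploiting the low dimensions or explicit structure that make stable rationality imply rationality in the cases at hand.

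One then realizes $G$ as $\Gal(L/k)$ over the given global field $k$ with every local decomposition subgroup contained in $\mathcal F$. For abelian $G$ this reduces to class field theory plus Chebotarev density; for solvable $G$ one invokes Shafarevich's theorem over number fields and explicit geometric constructions over function fields, controlling ramification to force cyclic decomposition groups at the bad places and using Chebotarev at the good ones. The anticipated main obstacle is precisely this joint realization: coordinating the algebraic existence of a non-stably-permutation invertible $G$-lattice with the arithmetic existence of a Galois extension $L/k$ whose every local decomposition subgroup lies in the prescribed family $\mathcal F$, over an \emph{arbitrary} global field $k$. Navigating potential Grunwald--Wang obstructions at archimedean and small residue-characteristic places is the sharpest technical point; the expected resolution is to choose $(G,\mathcal F)$ with enough flexibility that the local constraints can be absorbed simultaneously, with cyclicity of all decomposition groups as the working hypothesis.
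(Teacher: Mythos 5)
Your proposal follows essentially the same route as the paper: a norm-one-type torus for a small solvable group (the paper uses $F_{20}=\mathbb Z/5\rtimes\mathbb Z/4$, and also $(\mathbb Z/2)^3$), with flasque quotient $F$ invertible but not stably permutation, combined with Shafarevich's theorem to force all decomposition groups cyclic and classical rationality results for tori split by cyclic extensions locally. The only imprecision is that base-change-invariant Brauer triviality corresponds to $F$ being $H$-trivial rather than invertible, but since invertibility is the stronger condition your construction still goes through.
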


\begin{remark}
We do not impose any conditions on the reductions because of the presence of
disconnected fibres of integral models of tori at the ramified places.
The unirationality condition is satisfied automatically since any $k$-torus is
$k$-unirational \cite[8.13(2)]{Bo}. One can choose $X$ in Theorem \ref{th:tori} to be of dimension 3.
\end{remark}

The second result states that one can do better and reduce the dimension of counter-examples
to two.

\begin{theorem} \label{th:surf}
For any global field $k$ of characteristic $\ne 2$
there exists a smooth, projective, $k$-unirational, geometrically rational $k$-surface $X$
for which the Brauer obstruction to rationality is absent, $X_v:=X\times _kk_v$ is $k_v$-rational
for all places $v$ of $k$ but $X$ is not $k$-rational.
\end{theorem}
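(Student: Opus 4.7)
My plan is to reduce Theorem \ref{th:surf} to Theorem \ref{th:tori} by constructing a two-dimensional conic bundle surface $Y$ over $k$ whose geometric Picard module $\Pic(\overline{Y})$, viewed as a $G_k=\Gal(\overline{k}/k)$-lattice, is stably isomorphic to the Picard module of the toric threefold $X$ of Theorem \ref{th:tori}. The stable isomorphism class of the geometric Picard lattice is a stable $k$-birational invariant of smooth projective geometrically rational varieties, and in particular the cohomological quantities $H^i(K,\Pic(\overline{\,\cdot\,}))$ over arbitrary $K/k$ match between $X$ and $Y$. Consequently the absence of the Brauer obstruction over every $K/k$ and the failure of $k$-rationality will transfer directly from $X$ to $Y$.

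Concretely, I would seek a standard conic bundle $\pi\colon Y \to \mathbb{P}^1_k$ whose geometric degenerate fibres lie over a set $\{P_1,\ldots,P_r\}$ of $k$-rational points of $\mathbb{P}^1$, each equipped with a quadratic splitting algebra $L_i/k$. The combinatorial data $(\{P_i\},\{L_i\})$ is to be chosen so that the standard presentation of $\Pic(\overline{Y})$ in terms of the components of the fibres over the $P_i$ renders it stably isomorphic to $\Pic(\overline{X})$ as a $G_k$-module. The resulting vanishing of $H^1(K,\Pic(\overline{Y}))$ for all $K/k$, combined with the Hochschild--Serre injection $\Br(Y_K)/\Br(K)\hookrightarrow H^1(K,\Pic(\overline{Y}))$ (available because $Y$ is geometrically rational), then gives absence of the Brauer obstruction in the required strong base-change form. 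Local $k_v$-rationality of $Y_v$ is to be established directly, by arranging the $L_i$ to split sufficiently at each $v$ so that Iskovskikh's rationality criterion for conic bundles applies over $k_v$; this mirrors and exploits the local rationality property of $X_v$ furnished by Theorem \ref{th:tori}. Finally, failure of $k$-rationality of $Y$ follows from the non-triviality of the stable isomorphism class of $\Pic(\overline{Y})$ together with Iskovskikh's criterion, just as it does for $X$.

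The principal obstacle is the explicit matching step: realising a prescribed stable isomorphism class of $G_k$-lattices as the Picard lattice of a concrete conic bundle over $\mathbb{P}^1_k$. This is a combinatorial and Galois-theoretic problem requiring a judicious choice of the points $P_i$ and of the splitting algebras $L_i$, and I expect it to be the main technical difficulty; once it is in place, the remaining verifications reduce to standard machinery from the Colliot-Th\'el\`ene--Sansuc theory of rational surfaces.
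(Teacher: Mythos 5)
Your proposal has a genuine gap, and it lies exactly where you locate the ``main technical difficulty'': the matching step is not merely hard, it is obstructed, and without it your only mechanism for non-$k$-rationality disappears. The sole reason you give for $Y$ not being $k$-rational is that $\Pic(\overline{Y})$ is not stably permutation (non-trivial stable isomorphism class). First, that invariant only obstructs \emph{stable} $k$-rationality, so even granting the construction you would be proving a different (stronger) statement than you can deliver. Second, and decisively, within the class of surfaces where the rest of your argument lives --- conic bundles over $\mathbb P^1$ with few enough degenerate fibres that Iskovskikh's criteria control both local rationality and global irrationality, i.e.\ essentially the conic bundles with $4$ degenerate fibres / degree $4$ Del Pezzo surfaces --- the result of Kunyavski\u\i--Skorobogatov--Tsfasman cited in the paper's final remark shows that $\Pic(\overline{S})$ is stably permutation \emph{if and only if} it is $H$-trivial. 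Hence the strong Br-triviality you impose (vanishing of $H^1(K,\Pic)$ for all $K/k$, i.e.\ coflasqueness) forces the Picard module to be stably permutation, and no lattice of the kind you want to import from the toric threefold (invertible-but-not-stably-permutation, or $H$-trivial-but-not-invertible) can be realized there. Your plan therefore cannot produce a counterexample by this route.

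The paper's proof works precisely because it abandons the lattice-theoretic obstruction. It takes the Beauville--Colliot-Th\'el\`ene--Sansuc--Swinnerton-Dyer Ch\^atelet surface $y^2-az^2=f(x)$ with $f$ an irreducible separable cubic of discriminant $a$: this surface is \emph{stably} $k$-rational, so $\Pic(\overline{X})$ is stably permutation and every invariant of the type you propose vanishes identically (which also gives Br-triviality over all $K/k$ for free). Non-$k$-rationality is instead a geometric fact: $X$ is a relatively minimal conic bundle with $4$ degenerate fibres, hence not $k$-rational by Iskovskikh's theorem. Local rationality is then arranged by choosing the splitting field of $f$ (an $S_3$-extension) via Shafarevich's theorem so that all decomposition groups are cyclic; over each $k_v$ the Galois action on the eight fibre components then permits blowing down to a conic bundle with at most $3$ degenerate fibres, which is $k_v$-rational. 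If you want to salvage your reduction-to-tori idea, you would need a non-rationality criterion for surfaces that, like Iskovskikh's, sees beyond the stable birational class --- your proposal never supplies one.
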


It turns out that the celebrated example of a cubic $k$-surface which is
stably $k$-rational but not $k$-rational \cite{BCTSSD} also works in our
context.

\begin{remark}
A simple-minded meaning of Theorems \ref{th:tori} and \ref{th:surf}
can be formulated as follows: even within a class of varieties where the Brauer--Manin obstruction
is the only obstruction to the local-to-global principle for the existence of rational
points (Hasse principle), this obstruction, even in a stronger, base change invariant form,
may be insufficient for explaining counter-examples to the local-to-global principle for rationality.

In more technical terms, for $X$ as in each of the above theorems we have
$$\Br(X)/\Br(k)=H^1(k,N)=0$$
where $N=\Pic (X\times_k\bar k)$ is the geometric Picard group viewed as a Galois module
(for the first equality in the above formula see, e.g., \cite[Proposition~5.4.2]{CTSk}).
Moreover, this vanishing property holds after any extension $K/k$ of the ground field.
However, in the set-up of Theorem \ref{th:tori}, there are $X$ for which there
exists a subtler obstruction:
$N$ is not a stably permutation module thus preventing $X$ from being $k$-rational
or even stably $k$-rational.

As to examples in Theorem \ref{th:surf}, $N$ is a stably permutation module but
there is another obstruction to the $k$-rationality which is of geometric nature.
Namely, $X$ is birationally $k$-equivalent to a conic bundle over the projective line
with sufficiently many degenerate fibres, and one can use deep results of
Iskovskikh \cite{Is2} which rely on the classical method of linear systems with
base points going back to Segre (see also Manin's book \cite{Ma}).

To prove the rationality over all completions, both for tori and surfaces we use
a theorem of Shafarevich from the inverse Galois theory.
\end{remark}

Explicit examples implying the statement of Theorem \ref{th:tori} are contained in
Section \ref{sec:tori}.
Section \ref{sec:surf} is devoted to the proof of Theorem \ref{th:surf}.
Section \ref{sec:prelim} contains some necessary preliminaries.

\section{Preliminaries} \label{sec:prelim}
The monographs \cite{Vo}, \cite{CTSk} and the survey \cite{MT} can serve as
general references for algebraic tori, Brauer group, and rational surfaces,
respectively. Below we collect some basic facts on tori indispensable for our
considerations.
We also recall a not very well known fact related to Shafarevich's theorem
on the realizability of solvable groups as Galois groups of extensions of global
fields.

We first fix some standing notation and recall some basic definitions.
Unless stated otherwise, throughout below $k$ is an arbitrary field,
$\bar k$ is a fixed separable closure of $k$, $\Gamma =\Gal (\bar k/k)$ is the absolute Galois group of $k$.
For a $k$-variety $X$, we shorten $X\times _k\bar k$ to $\overline X$.

\begin{remark}
To justify the above notation, note that the main two objects of our attention in this note, algebraic tori
and geometrically rational surfaces, split over a separable closure of the field of definition. For tori this
is a well-known fact for which several different proofs have been produced by Ono, Borel, Springer, Tate, Tits;
see \cite{Yu} for a nice overview. For rational surfaces this was proved by Coombes \cite{Co}.
\end{remark}

The (cohomological) Brauer group of a $k$-variety $X$ is denoted $\Br (X)$.
For a smooth non-projective variety $V$ we also consider the unramified Brauer group
$\Brnr (k(V)/k)$, which is isomorphic to $\Br (X)$ where $X$ is a smooth projective
variety containing $V$ as an open subset; see \cite{CTSk} for details.

\begin{remark} \label{Brnr}
For the classes of varieties considered in this paper, we have $\Brnr(k(V)/k)\cong
H^1(k, \Pic (\overline X))$, where $\Pic (\overline X)$ is the Picard group of $\overline X$
viewed as a Galois module, see, e.g. \cite[Propositions 5.4.2 and 6.2.7]{CTSk}.
\end{remark}

\begin{definition} \label{rat}
We say that a  $k$-variety $V$ is
\begin{enumerate}
\item[(i)] $k$-{\it rational} if $V$ is birationally $k$-equivalent to $\mathbb A^n$;
\item[(ii)] {\it stably} $k$-{\it rational} if $V\times \mathbb A^m$ is $k$-rational for some $m\ge 0$;
\item[(iii)] $\Br$-{\it trivial} if $\Brnr (K(V)/K)$ is isomorphic to $\Br (K)$ for all field extensions $K/k$.
\end{enumerate}
\end{definition}
We have irreversible implications $\mathrm{(i)\Rightarrow(ii)\Rightarrow(iii)}$.
In case (iii), we sometimes say that the Brauer obstruction is absent.

\subsection{Algebraic tori}

Given a $k$-torus $T$, the free abelian group $M=\hat T=\Hom (T,\mathbb G_{\mathrm m})$ viewed as a $\Gamma$-module
is called the character module of $T$. The category of $k$-tori is dual to the category of finitely generated
$\mathbf Z$-free $\Gamma$-modules (for the sake of brevity, we just say ``modules'' throughout below).

Every $k$-torus splits over a finite Galois field extension $L$ of $k$, i.e.
$T\times_kL$ is isomorphic to the split $L$-torus $\mathbb G_{\mathrm m,L}^d$, $d=\dim T$. The smallest such $L$
is called the splitting field of $T$, we denote $\Pi=\Gal (L/k)$ and call it the splitting group of $T$. Accordingly,
we replace $\Gamma$-modules with $\Pi$-modules.

\begin{definition} \label{mod}
We say that a $\Pi$-module $N$ is
\begin{enumerate}
\item [(i)] {\it permutation} if it has a $\mathbb Z$-base permuted by $\Pi$;
\item
[(ii)] {\it stably permutation} if $N\oplus S_1\cong S_2$ for some permutation modules $S_1$, $S_2$;
\item
[(iii)] {\it invertible} if $N$ is a direct summand of a permutation module;
\item
[(iv)] $H^1$-{\it trivial} (aka coflasque) if $H^1(\Pi',N)=0$ for all subgroups $\Pi'\le \Pi$;
 \item
[(v)] {\it flasque} if the dual module $N^{\circ}:=\Hom (N,\mathbb Z)$ is $H^1$-trivial;
\item
[(vi)] $H$-{\it trivial} if it is both flasque and coflasque.
\end{enumerate}
\end{definition}

Accordingly, we sometimes say that a torus $T$ is flasque (coflasque) if so is
its character module $\hat T$.

We have irreversible implications $\mathrm{(i)\Rightarrow(ii)\Rightarrow(iii)\Rightarrow(iv)\cap (v)=(vi)\Rightarrow(iv)({\textrm{ or}} (v))}$.

Any module $M$ can be embedded into a short exact sequence
\begin{equation} \label{flasque}
0\to M\to S\to F\to 0,
\end{equation}
where $S$ is permutation and $F$ is flasque; such a sequence is called a flasque resolution of $M$.


Note that if $M=\hat T$ is the character module of a torus $T$, we have $\Brnr (k(T)/k)\cong H^1(\Pi, F)$. 
Indeed, any torus $T$ can be embedded into a smooth projective model $X$ as an open subset 
(this is true even in positive characteristic, see \cite{CTHS}), the module $F:=\Pic (\overline X)$ is flasque 
\cite[4.6]{Vo}, \cite[Proposition~6]{CTSa}, and we can argue as in Remark \ref{Brnr}.    
More generally, the properties of $F$ encode the rationality properties of $T$
as follows. Let us add to the list given in Definition \ref{rat} one more property: we say that $T$ is

$\mathrm{(ii')}$  {\it retract rational}  if $T\times T'$ is $k$-rational for some torus $T'$.

We then have the implications $\mathrm{(i)\Rightarrow(ii)\Rightarrow(ii')\Rightarrow(iii)}$ for the
properties listed in Definition \ref{rat}, all irreversible
except possibly for the leftmost one, whose reversibility is a notoriously difficult long-standing problem.
The rightmost implication is a consequence of the following relation between the properties in Definitions \ref{rat} and \ref{mod}:
$T$ is stably rational (resp. retract rational, resp. $\Br$-trivial) if and only if the module $F$ in a flasque
resolution of $M=\hat T$ is stably permutation (resp. invertible, resp. $H$-trivial).

We shall use the notation $R_{K/k}$ for Weil's restriction of scalars from $K$ to $k$,
and in particular, the kernel of the norm map $R_{K/k}\mathbb G_{\textrm{m},K}\to \mathbb G_{\textrm{m},k}$
will be called a norm torus and denoted $R_{K/k}^1\mathbb G_{\textrm{m}}$.

\subsection{Shafarevich's theorem}
We shall systematically use the following fact contained in the
proof of the celebrated Shafarevich's theorem on the realizability
of all solvable groups as Galois groups; see \cite[Section~9.6]{NSW}
and particularly \cite{So} for details.

\begin{theorem} \label{Sh} ${\mathrm{(Shafarevich)}}$
Let $k$ be a global field, and let $G$ be a finite solvable group.
Then there exists a Galois field extension $K/k$ with group $G$
such that all decomposition groups $G_v$ are cyclic.
\end{theorem}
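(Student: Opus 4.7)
The plan is to proceed by induction on $|G|$. Since $G$ is solvable and nontrivial, any minimal normal subgroup $N \triangleleft G$ is characteristically simple inside a solvable group, hence elementary abelian of some prime exponent $p$, say $N \cong (\Z/p)^r$. The quotient $G/N$ is solvable of strictly smaller order, so by the inductive hypothesis there exists a Galois extension $K_0/k$ with $\Gal(K_0/k) \cong G/N$ whose decomposition groups are all cyclic. The task reduces to producing $K \supseteq K_0$ with $\Gal(K/k) \cong G$ fitting the exact sequence
$$1 \to N \to G \to G/N \to 1,$$
in such a way that the decomposition groups of $K/k$ remain cyclic at every place.

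The key idea is to tightly control the ramification of $K/K_0$. I would introduce an auxiliary finite set $S$ of places of $k$, each split completely in $K_0/k$, with the following properties: (a) $K/K_0$ is unramified outside the places of $K_0$ lying above $S$; (b) the local extension $K/K_0$ at each such place is cyclic of $p$-power order; (c) $K/K_0$ splits completely at every place of $K_0$ above a place $v$ of $k$ that is ramified in $K_0/k$. The existence of such ``Scholz'' primes is a consequence of Chebotarev's density theorem, and the existence of a $\Gal(K_0/k)$-equivariant abelian extension of $K_0$ realizing the prescribed local pattern and the correct cocycle class in $H^2(G/N, N)$ is supplied by class field theory for $K_0$, after enlarging $S$ enough to kill the $H^2$-obstruction via reciprocity.

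Granting such an $S$, cyclicity of decomposition groups follows by case analysis. At a place $v$ where $K/k$ is unramified, the decomposition group is generated by Frobenius and hence cyclic. At a $v$ ramified in $K_0/k$, condition (c) makes $K/K_0$ split completely above $v$, so the decomposition group in $\Gal(K/k)$ at any place over $v$ coincides with that in $\Gal(K_0/k)$, which is cyclic by the inductive hypothesis. At a $v \in S$, the fact that $v$ splits completely in $K_0/k$ together with (a) forces every decomposition group of $\Gal(K/k)$ above $v$ to lie in $N$; by (b) it is cyclic of $p$-power order. The principal obstacle in this scheme is the simultaneous satisfaction of (a)--(c) against the $H^2$-obstruction: one has to enlarge $S$ enough to both annihilate the obstruction class and accommodate the imposed complete-splitting conditions at the ``old'' ramified places, and the two demands compete in the relevant idèle class group. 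Reconciling them is the technical heart of Shafarevich's original argument, whose detailed execution I would follow as given in \cite[\S9.6]{NSW} and \cite{So}.
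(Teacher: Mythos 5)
First, note that the paper does not actually prove this statement: it is quoted as a known refinement of Shafarevich's theorem, with the proof delegated entirely to \cite[Section~9.6]{NSW} and \cite{So}. Your proposal ultimately does the same --- you explicitly defer ``the technical heart'' to those references --- so as written it is an outline of a strategy rather than a proof, and it must be judged on whether the outline is the right one.

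It is not, in one essential respect. The induction on $|G|$ via a minimal normal subgroup $N\cong(\mathbb Z/p)^r$ fixes a field $K_0$ with $\Gal(K_0/k)\cong G/N$ \emph{before} attempting the embedding problem $1\to N\to G\to G/N\to 1$. That embedding problem is in general non-split, and for a fixed $K_0$ it can be unsolvable outright: the obstruction is determined by $K_0$ and the extension class of $G$, and no enlargement of the auxiliary set $S$ of split Scholz primes can remove a nonzero obstruction that is already there. (A standard illustration: $\mathbb Q(\sqrt 3)$ embeds in no cyclic quartic field, so if an inductive step for $G=\mathbb Z/4$ happened to produce $K_0=\mathbb Q(\sqrt 3)$, the construction would die.) This is precisely the difficulty that makes Shafarevich's theorem hard; the actual argument in \cite[Section~9.6]{NSW} circumvents it by replacing $G$ with a larger solvable group surjecting onto $G$ that is an iterated \emph{split} extension with nilpotent kernels, and then solving only split embedding problems, for which the Scholz-prime and class-field-theory machinery does apply; the further requirement that all decomposition groups stay cyclic is the refinement carried out in \cite{So}. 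Your case analysis of the decomposition groups (unramified places, places ramified in $K_0/k$, places in $S$) is correct as far as it goes, but it sits on top of an inductive scheme that collapses at the first non-split step, so the proposal has a genuine structural gap rather than merely missing details.
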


\section{Toric examples} \label{sec:tori}

First note that if one drops the requirement of the absence
of the Brauer obstruction, the task becomes very easy. Say,
the norm torus $R_{L/k}^1\mathbb G_{\mathrm m}$ corresponding
to a biquadratic extension $L=k(\sqrt{a},\sqrt{b})$ all of whose
decomposition groups are cyclic (which is easily achieved with
the help of the quadratic reciprocity law) fulfils the job: $T$
is not $k$-rational but all the $T_v:=T\times_kk_v$ are $k_v$-rational.
A well-known particular example is $L=\mathbf Q(\sqrt{13}, \sqrt{17})$.

In this section we exhibit three different examples of tori
with the trivial Brauer obstruction which violate the
local-global principle for rationality. In each example, we first
provide a finite solvable group $\Pi$ and a $\Pi$-module $M$
with the needed properties and then choose a Galois extension
$L/k$ of a global field $k$ with group $\Pi$ which satisfies the
conditions of Shafarevich's theorem.

\begin{example} \label{ex1}
Let $\Pi = \mathbb Z/5\mathbb Z \rtimes \mathbb Z/4\mathbb Z$,
the nontrivial semidirect product of the cyclic group of order 5 by the
cyclic group of order 4. This group is oftentimes denoted by $F_{20}$
and called the Frobenius group of degree 5. It is isomorphic to the
group of affine transformations of $\mathbb F_5$ and is a part of
the family of Frobenius groups.

Choose an element $\sigma\in\Pi$ of order 4
and define $M=\mathbb Z[\Pi/\sigma]/\mathbb Z$. If $L/k$ is a Galois
extension with group $\Pi$, then $M$ is the character
module of the norm torus $T=R_{K/k}^1\mathbb G_{\mathrm m}$ corresponding
to the extension $K/k$ where $K=L^{\sigma}$.

Let $0\to M\to S\to F\to 0$ be a flasque resolution of $M$.
It is known (see \cite[R4, (d2)]{CTSa}, \cite{EM1}, \cite{En}) that
the module $F$ is invertible but not stably permutation, so that
the torus $T$ is retract $k$-rational but not stably $k$-rational.
The absence of the Brauer obstruction follows from the retract
rationality.

Let now $L/k$ be an $F_{20}$-extension of global fields all of whose
decomposition groups $\Pi_v$ are cyclic (such an extension exists by
Shafarevich's theorem). Then every torus $T_v=T\times_kk_v$
splits over a cyclic extension of degree $2^s5^t$, and all such tori are
rational \cite{BC}.

\end{example}

\begin{remark} \label{rem:generic}
Starting from a global field $k$, it is not an obvious task to exhibit an explicit
$F_{20}$-extension $L/k$ all of whose decomposition groups are cyclic. If $\ch(k)=0$, one can
try to use the generic $F_{20}$-polynomial constructed by Lecacheux \cite{Le}.
\end{remark}

Although Example \ref{ex1} is sufficient for our goals, we present another two,
each having certain advantages.

\begin{example} \label{ex2}
The torus in Example \ref{ex1} is not so far from being rational. One can construct
an example with the same properties where $T$ is not retract rational, at the
expense of small dimension and explicit construction.

The construction is based on the work of Endo and Miyata \cite{EM2} who classified
the finite groups $\Pi$ such that there exists an $H$-trivial $\Pi$-module which is not
invertible. It turned out that this class consists of all groups
except those whose $p$-Sylow subgroups are cyclic for all odd $p$ and the $2$-Sylow subgroups
are cyclic or dihedral. Thus we can consider the simplest example $\Pi=(\mathbb Z/2\mathbb Z)^3$.
By \cite[Theorem~2.1]{EM2}, there exists an $H$-trivial $\Pi$-module $N$ which is not invertible.
Then any $k$-torus $T$ such that the module $F$ in a flasque resolution \eqref{flasque} of $M=\hat T$ is isomorphic to
$N$ is $\Br$-trivial but not retract rational.

To construct such a $T$, one can embed $F$ into a short exact sequence $$0\to M\to \mathbb Z[\Pi]^r\to F \to 0$$
with a $\mathbb Z$-free module $M$ and consider $T$ with character module $\hat T=M$.

Choosing a Galois extension $L/k$ of global fields with group $\Pi$ which
satisfies Shafarevich's theorem, we conclude that all tori $T_v$ are rational.
Indeed, any such torus is split by a quadratic extension of $k$ and is therefore
a direct product of tori of dimensions 1 and 2, hence $k$-rational \cite[Section~4.9]{Vo}.
\end{example}

\begin{remark} \label{131737}
For $k=\mathbb Q$, one can easily construct a required extension $L/\mathbb Q$ using
quadratic reciprocity, say, one can take $L=\mathbb Q(\sqrt{13},\sqrt{17}, \sqrt{89})$.
\end{remark}

\begin{remark}
The construction of the module $N$ presented above is somewhat implicit. To do this in
a more or less explicit way, one can use the construction in \cite[Section~2]{EM2}.

Namely, let $J=\mathbb Z[\Pi]/\mathbb Z$, then
one can write a flasque resolution of $J$ in the form
\begin{equation} \label{flJ}
0\to J \to \mathbb Z[\Pi]^7\to N_0 \to 0,
\end{equation}
see \cite[p.~233]{EM2}.
Let now
\begin{equation} \label{flN0}
0\to N_0\to S_0\to N_1\to 0
\end{equation}
be a flasque resolution of $N_0$.
By \cite[Lemma~2.4]{EM2}, the module $N_1$ is not invertible.

At this point, it is convenient to use the following general statement,
which might be interesting in its own right. It is implicitly contained
in a different form in \cite{EM2}, as a parenthetical note in the last
paragraph of Section 2 (without proof).

\begin{lemma} \label{lem:EM}
Every flasque $k$-torus is stably $k$-equivalent to some $\Br$-trivial
$k$-torus.
\end{lemma}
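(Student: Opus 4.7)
The plan is to translate the statement into module-theoretic language: given the flasque $k$-torus $T$ with character module $M=\hat T$ (a flasque $\Pi$-module, where $\Pi=\Gal(L/k)$ is the splitting group of $T$), I will construct an $H$-trivial $\Pi$-module $N$ in the same flasque class as $M$. The $k$-torus $T'$ with $\hat{T'}=N$ will then be flasque, $\Br$-trivial (by the $H$-triviality of $N$), and stably $k$-equivalent to $T$.

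The first step is to build a \emph{coflasque hull} $0\to M\to N\to P\to 0$ with $P$ permutation and $N$ coflasque, by iteratively killing nonzero classes in $H^1(\Pi',M)$ for $\Pi'\le\Pi$. Given $\xi\in H^1(\Pi',M)$, Shapiro's lemma and the identification $H^1(\Pi',M)\cong\operatorname{Ext}^1_\Pi(\mathbb Z[\Pi/\Pi'],M)$ produce a $\Pi$-extension $0\to M\to E\to\mathbb Z[\Pi/\Pi']\to 0$ whose connecting map sends the class of the identity coset to $\xi$. Using that $H^1(\Pi'',\mathbb Z[\Pi/\Pi'])=0$ for every $\Pi''\le\Pi$ (by the Mackey decomposition and vanishing of $H^1(G,\mathbb Z)$ for finite $G$), the long exact cohomology sequence shows that $H^1(\Pi'',E)$ is a quotient of $H^1(\Pi'',M)$, and strictly smaller at $\Pi''=\Pi'$. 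Iterating with $M$ replaced by $E$, and noting the strict decrease of $\sum_{\Pi''}|H^1(\Pi'',-)|$, the process terminates in a sequence of the desired form.

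The second step is to check that the resulting $N$ is automatically flasque as well. Applying $\Hom(-,\mathbb Z)$ (all modules being $\mathbb Z$-free) yields $0\to P^\circ\to N^\circ\to M^\circ\to 0$. Since $M$ is flasque, $M^\circ$ is coflasque; and $P^\circ$ is permutation, hence coflasque. The long exact sequence $0=H^1(\Pi',P^\circ)\to H^1(\Pi',N^\circ)\to H^1(\Pi',M^\circ)=0$ forces $H^1(\Pi',N^\circ)=0$ for every $\Pi'\le\Pi$, so $N^\circ$ is coflasque, i.e., $N$ is flasque. Combined with coflasqueness from step one, $N$ is $H$-trivial.

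The final step compares flasque classes. Taking any flasque resolution $0\to M\to Q\to F\to 0$ and forming the pushout along $M\hookrightarrow N$ produces a second row $0\to N\to Q'\to F\to 0$ together with a left column $0\to Q\to Q'\to P\to 0$. As $Q$ is permutation, $\operatorname{Ext}^1_\Pi(\mathbb Z[\Pi/\Pi'],Q)=H^1(\Pi',Q)=0$ for each $\mathbb Z[\Pi/\Pi']$-summand of $P$ (Shapiro again), so this left column splits and $Q'\cong Q\oplus P$ is permutation. Hence the bottom row is a flasque resolution of $N$ with the same flasque quotient $F$, so $M$ and $N$ lie in the same class modulo permutation summands, and $T$ and $T'$ are stably $k$-equivalent. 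The main technical obstacle I anticipate is the cohomological bookkeeping of step one---ensuring that killing a class at one subgroup does not resurrect classes at others---which the long exact sequence and the vanishing of $H^1$ on induced modules handle cleanly.
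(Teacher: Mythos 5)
Your proof is correct and takes essentially the same route as the paper: the paper cites Endo--Miyata (Lemma~1.1(2) of \cite{EM2}) for the coflasque embedding $0\to M\to N\to P\to 0$ with $P$ permutation that you construct by hand in your first step, and then, exactly as in your second step, dualizes to see that $N$ is also flasque, hence $H$-trivial. The only divergence is at the end, where the paper deduces the stable $k$-equivalence of the two tori directly from the permutation quotient $P$ (the dual sequence realizes one torus as a torsor under a quasi-trivial torus over the other), while you route through a pushout of flasque resolutions and the theorem that equality of flasque classes implies stable equivalence --- both justifications are standard and valid.
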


\begin{proof}
Let $T$ be a flasque torus with character module $\hat T=M$.
By \cite[Lemma~1.1(2)]{EM2}, one can embed $M$
into an exact sequence
\begin{equation} \label{cofl2}
0\to M\to M'\to S\to 0
\end{equation}
with $M'$ coflasque and $S$ permutation.
Since $M$ was supposed to be flasque,
$M'$ is also flasque, hence $H$-trivial.

It remains to notice that since $S$ is a permutation module,
sequence \eqref{cofl2} gives rise to the stable equivalence of tori $T$ and $T'$
corresponding to $M$ and $M'$, respectively. As $M'$ is $H$-trivial, $T'$ is $\Br$-trivial.
\end{proof}

We can now finish the construction.  Let us apply Lemma \ref{lem:EM}
to the flasque module $N_0$ appearing in \eqref{flJ} and \eqref{flN0}.
We obtain an $H$-trivial module $N$ such that the $k$-tori $T_0$ and $T$
corresponding to the modules $N_0$ and $N$, respectively, are stably
$k$-equivalent. Hence one can choose a flasque resolution of $N$ of the form
$$
0\to N\to S_1 \to N_1\to 0
$$
with $N_1$ the same as in \eqref{flN0}. As the module $N_1$ is not
invertible, so is $N$.  Thus $N$ is an $H$-trivial,  non-invertible $\Pi$-module.

\medskip

Note that in this example  $\dim (T)$ is huge (even at the starting point we have the
module $N_0$ of rank 49), in sharp contrast with the 4-dimensional torus
in Example \ref{ex1}.
\end{remark}

In the next example we exhibit a 3-dimensional torus with the needed properties.
One cannot do better from the point of view of dimension because all tori of
dimension 1 or 2 are rational by a theorem of Voskresenski\u\i \ \cite[Section~4.9]{Vo}.
However, the construction is a bit more complicated compared to Example \ref{ex1}.

\begin{example} \label{ex3}
The construction is based on the author's paper \cite{Ku}. As in Example
\ref{ex2}, let $\Pi=(\mathbb Z/2\mathbb Z)^3=\left<\alpha,\beta,\gamma\right>$.
Choose a subgroup of $\Pi$ of order 4, say, $\Pi_0=\left<\alpha,\beta\right>$.
Let $I:=\ker[\mathbb Z[\Pi_0]\to\mathbb Z]$ denote the augmentation
ideal of $\mathbb Z[\Pi_0]$.

Given a field $k$ and a Galois extension $L/k$ with group $\Pi$, let
$L_0=k(\sqrt{a},\sqrt{b})=L^{\gamma}$, $L_1=k(\sqrt{c})=L^{\Pi_0}$.
The extensions $L_0$ and $L_1$ are linearly disjoint and
$L=L_0L_1=k(\sqrt{a},\sqrt{b}, \sqrt{c})$ is their compositum.

Further, denote by $T_0$ the $k$-torus with character module $I$
split by $L_0$. Let $N\colon L_1\to k$ denote the norm map. We denote by the same letter
the norm map
\begin{equation} \label{norm}
N\colon R_{L_1/k}(T_0\times _kL_1)\to T_0
\end{equation}
 and define $T:=\ker N$ (which is isomorhic to the quotient $R_{L_1/k}(T_0\times _kL_1)/T_0$).

It is convenient to describe this construction translating it into the language of finite
groups of integral matrices. Indeed, given any $n$-dimensional $k$-torus $T_0$ with minimal splitting
field $L_0$, where $\Gal (L_0/k)=\Pi_0$, one can attach to the $\Pi_0$-module $\hat T$ the finite subgroup $W_0$
in $\GLL (n, \mathbb Z)$, and to isomorphic tori there correspond conjugate subgroups.
If now  $L_1$ is a quadratic extension of $k$ linearly disjoint from $L_0$, as in our example, then denoting
$L=L_0L_1$ and defining $T$ as the kernel of the map \eqref{norm}, denote by $\Pi$ the Galois group of $L/k$
and by $W\subset  \GLL(n, \mathbb Z)$ the isomorphic image of $\Pi$. Then we have $W=W_0\times \left<\pm I_n\right>$,
where $I_n$ stands for the identity $n\times n$-matrix, see \cite[Lemme~4.6]{KS}.

Applying this observation to our set-up, we conclude that the subgroup $W$ is isomorphic to $W_1$ on the list
of \cite[Theorem~1]{Ku}. In Section 4 of \cite{Ku} (see also \cite[Section~7]{HY}) it is shown that $T$ is not retract rational.
It is also noted at the end of Section 3 of \cite{Ku} (see \cite{HY} for computer verification of this fact) that
$T$ is $\Br$-trivial.

If now we choose an extension $L/k$ of global fields so that all decomposition groups are cyclic, we conclude
that all $k_v$-tori $T_v$ are rational, as in the previous example.
 \end{example}

To finish the proof of Theorem \ref{th:tori}, it remains to construct a smooth projective $k$-variety $X$
containing the torus $T$ from any of Examples \ref{ex1}, \ref{ex2}, or \ref{ex3} as a dense open subset.
Recall that such an $X$ exists for any torus defined over any field, see \cite{CTHS}. Being birationally equivalent to $T$,
the $k$-variety $X$ keeps all rationality properties of $T$. By the definition of the unramified Brauer group,
we have $\Br (X\times_kK)=\Brnr (K(T)/K)=\Br(K)$ for all extensions $K/k$. Theorem \ref{th:tori} is proven. \qed

\begin{remark}
The examples constructed above give more than stated in Theorem \ref{th:tori}, namely they provide a variety $X$
which is not only $k$-irrational but is not even stably $k$-rational.
\end{remark}

\section{Rational surfaces} \label{sec:surf}

Before proving Theorem \ref{th:surf},
recall that the story started with the affine $\mathbb Q$-variety $V\subset \mathbb A^3$
given by
\begin{equation} \label{Ts}
y^2-221z^2=(x^2-13)(x^2-17).
\end{equation}
This example first appeared in Tsfasman's PhD thesis in 1982. It was explored there
and in the subsequent papers \cite{KT}, \cite{CT} (the latter was mentioned in the introduction to
\cite{FJ}).  Among many interesting properties, a smooth projectivization $X$ of $V$
satisfies the property of being $\mathbb Q_p$-rational for all $p$, $\mathbb R$-rational
but not $\mathbb Q$-rational.  However, $\Br (X)/\Br(\mathbb Q)=\mathbb Z/2\mathbb Z$, so this counter-example
is explained by the Brauer obstruction.

The surface $X$ arising from equation \eqref{Ts} belongs to a family of Ch\^atelet surfaces. It has
a structure of conic bundle over the projective line (by projecting onto the $x$-coordinate) with
4 degenerate fibres (pairs of transversally intersecting lines)
corresponding to the zeros of the polynomial on the right-hand side.

\medskip

\noindent{\it {Proof of Theorem \ref{th:surf}}}.
The example we are going to use is somewhat similar. We start with the affine cubic surface $V\subset \mathbb A^3$
given over an arbitrary global field $k$ with $\ch (k)\ne 2$ by
\begin{equation} \label{Zar}
y^2-az^2=f(x),
\end{equation}
where $a$ is not a square in $k$ and $f\in k[x]$ is a separable irreducible polynomial of degree 3 with discriminant $a$.
This surface was explored in \cite{BCTSSD} where it was proved that it is stably $k$-rational but not $k$-rational.

The smooth projectivization $X$ of the surface $V$ given by \eqref{Zar}
is also a Ch\^atelet surface and also has 4 degenerate
fibres (at the zeros of the right-hand side and at infinity). Let $L$ denote the splitting field of $f$.
Then $\Gal (L/k)$, the Galois group of $f$,  is isomorphic to the symmetric group $S_3$, it acts on the collection
$D=\{\ell_1, \bar\ell_1,\dots ,\ell_4,\bar\ell_4\}$ of eight components of the degenerate fibres
as $G=\left<(123)c_2c_3,  (12)c_3c_4\right>$, where $(ij)$ swaps the $i^{\mathrm{th}}$ and
$j^{\mathrm{th}}$ degenerate fibres, and $c_i$ swaps the components of the $i^{\mathrm{th}}$
fibre, see \cite[Theorem~4.19]{KST}. The $\Pi$-module $\Pic (\overline X)$ is
stably permutation \cite[Th.~2]{BCTSSD}
(this is only one ingredient in the proof that these particular surfaces
$X$ are stably $k$-rational). Therefore, we have $H^1(\Pi', \Pic (\overline X))=0$
for all $\Pi'\subseteq \Pi$, so that $\Br (X\times_kK)=\Br (K)$ for all extensions $K/k$.

Suppose that all decomposition groups $G_v$ of $L/k$ are cyclic (as $S_3$ is a solvable group,
such an extension $L/k$ exists for any global field $k$). Then $G_v$ acting on $D$ is
conjugate either to a cyclic group $\left<(123)c_2c_3\right>$ of order 3, or to a cyclic group
$\left<(12)c_3c_4\right>$ of order 2. In both cases, the resulting conic bundle $k_v$-surface $X_v$
is not relatively minimal: in the first case one can blow down the $4^{\mathrm{th}}$ degenerate fibre,
and in the second case the first two ones. It remains to apply Iskovskikh's results on the structure
and birational properties of conic bundle surfaces, Namely, we conclude that

$\bullet$ each surface $X_v$ is birationally $k_v$-equivalent to a conic bundle with 2 or 3 degenerate fibres and is hence $k_v$-rational \cite[Theorem~4.1]{Is1};

$\bullet$ the surface $X$ is a relatively minimal conic bundle with 4 degenerate fibres and hence is not $k$-rational \cite[Theorem~2]{Is2}.

Theorem \ref{th:surf} is proven. \qed

\begin{remark}
The surface $X$ from Theorem \ref{th:surf} is birationally $k$-equivalent
to a Del Pezzo surface $S$ of degree $4$ with $\Pic (S)\cong \mathbb Z\oplus \mathbb Z$,
see the proof of Theorem~4.19 of \cite{KST}. In the course of this proof it is also shown that $X$ is essentially the
only example of a $\Br$-trivial $k$-irrational surface within this class
(it turned out that within this class the module $\Pic (\overline S)$ is stably permutation
if and only if it is $H$-trivial). However, in Theorem~5.20 of {\it loc. cit.}
it is proven that within the class of Del Pezzo surfaces $S$ of degree 4 with  $\Pic (S)\cong \mathbb Z$
there are three more examples of $\Br$-trivial $k$-irrational surfaces (once again,
$\Pic (\overline S)$ is stably permutation
if and only if it is $H$-trivial). All three surfaces are
$k_v$-rational if all decomposition groups of $k$ are cyclic, and
Shafarevich's theorem is also applicable in each of the three cases.
It is not known whether these surfaces are stably $k$-rational.
\end{remark}

\noindent
{\it {Acknowledgement.}} I thank Jean-Louis Colliot-Th\'el\`ene for bringing \cite{FJ}
to my attention, asking the question mentioned in the introduction, and fruitful discussions. 
I also thank the anonymous referee for useful remarks.

\enddocument
\begin{thebibliography}{99}

\bibitem{BC}
M. I. Bashmakov, A. L.  Chistov,
``Rationality of a class of tori'',
\emph{Trudy Mat. Inst. Steklov} {\bf 148}  (1978), p.~27--29;
English transl. in \emph{Proc. Steklov Inst. Math.} {\bf 148} (1980), p.~23--25.

\bibitem{BCTSSD}
A. Beauville,  J.-L. Colliot-Th\'el\`ene, J.-J. Sansuc, P. Swinnerton-Dyer,
``Vari\'et\'es stablement rationnelles non rationnelles'',
\emph{Ann. Math.} {\bf 121} (1985), p.~283--318.

\bibitem{Bo}
A. Borel,
``Linear Algebraic Groups'', 2nd ed.,
Springer, 1991.


\bibitem{CTHS}
J.-L. Colliot-Th\'el\`ene, D. Harari, A. N. Skorobogatov,
``Compactification \'equivariante d'un tore (d'apr\`es Brylinski et K\"unnemann)'',
\emph{Expo. Math.} {\bf 23} (2005), p.~161--170.


\bibitem{CTSa}
J.-L. Colliot-Th\'el\`ene, J.-J. Sansuc,
``La R-\'equivalence sur les tores'',
\emph{Ann. Sci. \'Ec. Norm. Sup.} {\bf 10} (1977), p.~175--229.

\bibitem{CTSk}
J.-L. Colliot-Th\'el\`ene, A. N. Skorobogatov,
{\it The Brauer--Grothendieck group},
Springer, Cham, 2021.

\bibitem{Co}
K. R. Coombes,
``Every rational surface is separably split'',
\emph{Comment. Math. Helv.} {\bf 63} (1988), p.~305--311.

\bibitem{CT}
D. F. Coray, M. A. Tsfasman,
``Arithmetic on singular Del Pezzo surfaces'',
\emph{Proc. London Math. Soc.} {\bf 57} (1988), p.~25--87.

\bibitem{En}
S. Endo,
``The rationality problem for norm one tori'',
\emph{Nagoya Math. J.} {\bf 202} (2011), p.~83--106.

\bibitem{EM1}
S. End\^o, T. Miyata,
``On a classification of the function fields of algebraic tori'',
\emph{Nagoya Math. J.} {\bf 56} (1975), p.~85--104; corrigendum {\it{ibid.}} {\bf 79} (1980), p.~187--190.

\bibitem{EM2}
S. End\^o, T. Miyata,
``Integral representations with trivial first cohomology groups'',
\emph{Nagoya Math. J.} {\bf 85} (1982), p.~231--240.


\bibitem{FJ}
S. Frei, L. Ji,
``A threefold violating a local-to-global principle for rationality'',
\url{arXiv:2304.09306}.

\bibitem{HY}
A. Hoshi, A. Yamasaki,
``Birational classification for algebraic tori'',
\url{arXiv:2112.02280}.

\bibitem{Is1}
V. A. Iskovskikh,
``Rational surfaces with a sheaf of rational curves and with a positive square of canonical class'',
\emph{Mat. Sb.} {\bf 88} (1970), p.~90--119;
English transl. in \emph{Math. USSR Sb.} {\bf 12} (1970), p.~91--117.

\bibitem{Is2}
V. A. Iskovskikh,
``Birational properties of a surface of degree $4$ in $\mathbb P^4_k$'',
\emph{Mat. Sb. {\bf 88}} (1971), p.~31--37;
English transl. in \emph{Math. USSR Sb.} {\bf 17} (1972), p.~30--36.

\bibitem{Ku}
B. \`E. Kunyavski\u\i ,
``Three-dimensional algebraic tori'',
in \emph{Investigations in Number Theory}, Saratov Gos. Univ., Saratov, 1987,
p.~90--111;
English transl. in \emph{Selecta Math. Sov.} {\bf 9}  (1990), p.~1--21.

\bibitem{KS}
B. Kunyavski\u\i , J.-J. Sansuc,
``R\'eduction des groupes alg\'ebriques commutatifs'',
\emph{J. Math. Soc. Japan} {\bf 53} (2001), p.~457--483.

\bibitem{KST}
B. \`E. Kunyavski\u\i , A. N. Skorobogatov, M. A. Tsfasman,
\emph{Del Pezzo surfaces of degree four},
M\'em. Soc. Math. France, no.~37 (1989).

\bibitem{KT}
B. \`E. Kunyavski\u\i , M. A. Tsfasman,
``Zero-cycles on rational surfaces and N\'eron--Severi tori'',
\emph{Izv. Akad. Nauk SSSR Ser. Mat.} {\bf 48} (1984), p.~631--654;
English transl. in \emph{Math. USSR Izv.} {\bf 24} (1985), p.~583--603.

\bibitem{Le}
O. Lecacheux,
``Constructions de polyn\^omes g\'en\'eriques \`a groupe de Galois r\'esoluble'',
\emph{Acta Arith.} {\bf 86} (1998), p.~207--216.


\bibitem{Ma}
Yu. I. Manin,
\emph{Cubic forms: algebra, geometry, arithmetic},
Nauka, Moscow, 1972; English transl. (2nd ed.): North-Holland,
Amsterdam, 1986.


\bibitem{MT}
Yu. I. Manin, M. A. Tsfasman,
``Rational varieties: algebra, geometry, arithmetic'',
\emph{Uspekhi Mat. Nauk} {\bf 41} (1986), no.~2, p.~43--94;
English transl. in \emph{Russian Math. Surveys} {\bf 41} (1986), no.~2, p.~51--116.

\bibitem{NSW}
J. Neukirch, A. Schmidt, K. Wingberg,
\emph{Cohomology of number fields}, Springer, Berlin, 2000.


\bibitem{So}
J.  Sonn,
``Polynomials with roots in $\mathbb Q_p$ for all $p$'',
\emph{Proc. Amer. Math. Soc.} {\bf 136} (2008), p.~1955--1960.

\bibitem{Vo}
V. E. Voskresenski\u\i ,
\emph{Algebraic groups and their birational invariants},
Translations of Math. Monographs, vol.~179,
Amer. Math. Soc., Providence, RI, 1998.




\bibitem{Yu}
C. F. Yu,
``Chow’s theorem for semi-abelian varieties and bounds for splitting
fields of algebraic tori'',
\emph{Acta Math. Sinica, English Ser.} {\bf 35} (2019), p.~1453--1463.




\end{thebibliography}
